\newtheorem{theorem}{Theorem}
\newtheorem{claim}[theorem]{Claim}
\newtheorem{corollary}[theorem]{Corollary}
\newtheorem{definition}[theorem]{Definition}
\newtheorem{lemma}[theorem]{Lemma}
\newtheorem{observation}[theorem]{Observation}
\newtheorem{proposition}[theorem]{Proposition}
\newtheorem{remark}[theorem]{Remark}
\newtheorem{question}[theorem]{Question}
\newtheorem*{unnumberedquestion}{Question}
\date{}
\begin{document}

\title{Ramsey theory for highly connected monochromatic subgraphs \\ F1814}
\author{Jeffrey Bergfalk,
Michael Hru\v{s}\'{a}k,
\and Saharon Shelah}

\address{Centro de Ciencas Matem\'aticas\\
UNAM\\
A.P. 61-3, Xangari, Morelia, Michoac\'an\\
58089, M\'exico}

\email{jeffrey@matmor.unam.mx}

\email{michael@matmor.unam.mx}

\address{Department of Mathematics, Rutgers University, Hill Center, Piscataway, New Jersey, U.S.A. 08854-8019}

\address{Institute of Mathematics, Hebrew University, Givat Ram, Jerusalem 91904, Israel}

\email{shelah@math.rutgers.edu}

\thanks{{\it Date:} 15 December, 2018.\newline
{\it 2010 MSC.} 03E02,03E10\newline
{\it Key words and phrases.} Ramsey Theory, $k$-connected graph, highly connected graph, Mahlo cardinal, weakly compact cardinal.\newline
{The research of the second author was supported  by a PAPIIT grant IN100317. The research of the third author was supported by the United States-Israel Binational Science Foundation (BSF grant no. 2010405), by the NSF grant no. NSF-DMS 1101597, and by the European Research Council Grant Number 338821.}}

\begin{abstract} An infinite graph is highly connected if the complement of any subgraph of smaller size is connected. We consider weaker versions of Ramsey's Theorem asserting that in any coloring of the edges of a complete graph there exist large highly connected subgraphs all of whose edges are colored by the same color.
\end{abstract}
\maketitle
Ramsey's celebrated theorem in its most basic infinite form is the following: for any partition of the collection of pairs of natural numbers into finitely many sets, there exists some infinite $X\subseteq\mathbb{N}$ whose pairs all fall in one of those sets. A pithier rendering is by way of Erd\H{o}s and Rado's arrow notation:
\[\tag{$*$}\aleph_0\rightarrow(\aleph_0)_k^2\;\textnormal{ for any finite $k$}.\]
Here the outer cardinals $\aleph_0$, $2$, and $k$ parametrize the sorts of partitions under consideration: letting $[\mu]^\lambda$ denote the size-$\lambda$ subsets of $\mu$, the partitions in question in the above relation are all of the form $c:[\aleph_0]^2\to k$. The cardinal inside the parentheses records how large a homogeneous set we seek with respect to any such partition, and the arrow tells us we can always find one.

The following points are basic to the theory:
\begin{enumerate}
\item The above relation ``descends'' to finite contexts (see \cite{graham} \textsection 1.5 for a direct deduction). More precisely, for any finite $k$ and $m$ there is some $n$ such that
$$n\to(m)_k^2$$
This least such $n$ is often denoted $R(m;k)$.
\item Extending the relation ($*$) to higher cardinals is less straightforward. More precisely:
\begin{enumerate}
\item The relation $\aleph_1\rightarrow(\aleph_1)_k^2$ fails in a very strong sense, for any $k\leq\aleph_0$ (see \cite{todorcevic-pairs}).
\item More generally, for $\kappa<\mu$, the relation $\mu\rightarrow(\mu)_\kappa^2$ characterizes any uncountable cardinal $\mu$ as \textit{weakly compact}, that is, as a cardinal whose existence is a strictly stronger assumption than the ZFC axioms (see \cite{kanamori}).
\item Item (1), on the other hand, does fully generalize: for any $\kappa$ and $\mu$ there's some least $\nu$ such that 
$$\nu\to(\mu)_\kappa^2$$
This follows from Erd\H{o}s and Rado's theorem that $(2^\lambda)^{+}\rightarrow(\lambda^{+})_\lambda^2$ for any infinite $\lambda$ (see \cite{erdos-rado}).
\end{enumerate}
\end{enumerate}
Each of these facts will figure in the following. Recall lastly the more pictorial framing of Ramsey's relation in terms of edge-colorings of graphs:
$$\nu\to(\mu)_\lambda^2$$
if and only if every coloring of the edges of the complete graph on $\nu$ by $\lambda$ many colors contains some size-$\mu$ monochromatic subgraph which is complete.
It is this framing we will have generally in mind --- only our interest will be in subgraphs which are large in some finer sense than \textit{complete}. Namely:

\begin{definition} A graph $G$ is \textnormal{$\kappa$-connected} if it remains connected after the deletion of any fewer than $\kappa$ vertices.
\end{definition} 

Our question should at this point be clear. Where formality is necessary, we will denote a graph $G$ as an ordered pair $(vertices,\,edges)$. The \textit{size} of a graph is the cardinality of its vertex-set. For cardinal numbers $\kappa$, $\lambda$, $\mu$, $\nu$, write
$$\nu\to_{\kappa\text{-}c}(\mu)^2_\lambda$$
if every coloring of the edges of the complete graph on $\nu$ into $\lambda$ many colors contains some size-$\mu$ monochromatic subgraph which is $\kappa$-connected. More formally:
\begin{definition}
$\nu\to_{\kappa\text{-}c}(\mu)^2_\lambda$ if and only if for every $c:[\nu]^2\rightarrow\lambda$ there exists a $\xi<\lambda$ and $X\in[\nu]^\mu$ such that the graph $(X,c^{-1}(\xi)\cap[X]^2 )$ is $\kappa$-connected.
\end{definition}

\begin{unnumberedquestion} For which cardinals does the relation $\nu\to_{\kappa\text{-}c}(\mu)^2_\lambda$ hold?
\end{unnumberedquestion}

\section{Main results}

Note at the outset that $\kappa$-connectedness is a well-studied notion, not least for its evident relevance to network design; it dates at least to Menger's 1927 \cite{menger} (see also \cite{mader}). Observe as well that it articulates a number of graph theory's most basic concerns:\begin{enumerate}
\item A graph is 1-connected if and only if it is connected. \item A graph is 2-connected if and only if each of its edges belongs to a cycle. \item The only $\mu$-connected graph on any finite $\mu$ is the complete one.\end{enumerate}
By this last point,
\[\tag{$**$}\text{when $\mu$ is finite,
$\nu\to_{\mu\text{-}c}(\mu)^2_\lambda$ is simply the Ramsey relation $\nu\to(\mu)^2_\lambda$.}\] Hence for any finite $\lambda$ and $\kappa\leq\mu$,
$$\nu\to_{\kappa\text{-}c}(\mu)^2_\lambda$$
for some $\nu\leq R(\mu;\lambda)$. See \cite{matula} for much finer bounds on the least such $\nu$.

For infinite $\mu$, a $\mu$-connected graph on $\mu$ is no longer necessarily complete; such graphs nevertheless play a sufficiently critical role in the theory to merit a name and notation all their own:

\begin{definition} A graph $G=(V,E)$ is \textnormal{highly connected} if it remains connected after the deletion of any fewer than $|V|$ vertices. Write $\nu\to_{hc}(\mu)^2_\lambda$ if and only if $\nu\to_{\mu\text{-}c}(\mu)^2_\lambda$, i.e., if every coloring of the edges of the complete graph on $\nu$ into $\lambda$ many colors contains some size-$\mu$ monochromatic subgraph which is highly connected.
\end{definition}
\begin{observation}\label{obvious}$\big[\nu\to(\mu)^2_\lambda\big]\Rightarrow\big[\nu\to_{hc}(\mu)^2_\lambda\big]\Rightarrow\big[\nu\to_{\kappa\text{-}c}(\mu)^2_\lambda\big]$ for any $\kappa\leq\mu$.
\end{observation}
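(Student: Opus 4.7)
Both implications should follow almost directly from the definitions, so my plan is to unwind each in turn and make sure the quantifier ``any fewer than $\mu$'' interacts correctly with $\kappa \leq \mu$.

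For the first implication, I would start from a coloring $c:[\nu]^2\to\lambda$ witnessing $\nu\to(\mu)^2_\lambda$ and pick the complete monochromatic subgraph $(X,c^{-1}(\xi)\cap[X]^2)$ of size $\mu$ it provides. I then need to verify that a complete graph on $\mu$ vertices is highly connected. This is immediate: if $Y\subseteq X$ has $|Y|<\mu=|X|$, then $X\setminus Y$ is nonempty and the induced subgraph on $X\setminus Y$ is again complete, hence connected (and indeed trivially so unless it is a single vertex). So the same $X$ witnesses $\nu\to_{hc}(\mu)^2_\lambda$.

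For the second implication, I would again fix a coloring $c$ and apply $\nu\to_{hc}(\mu)^2_\lambda$ to obtain $\xi<\lambda$ and $X\in[\nu]^\mu$ such that $G:=(X,c^{-1}(\xi)\cap[X]^2)$ is highly connected, i.e.\ $\mu$-connected. To check that $G$ is $\kappa$-connected, I take any $Y\subseteq X$ with $|Y|<\kappa$; since $\kappa\leq\mu$ we have $|Y|<\mu$, and so by definition of high connectivity the induced subgraph $G\setminus Y$ is connected. Thus $G$ is $\kappa$-connected and the same $X$ and $\xi$ witness $\nu\to_{\kappa\text{-}c}(\mu)^2_\lambda$.

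Since nothing beyond definition-chasing is involved, I do not anticipate any genuine obstacle; the only point worth flagging is the role of the assumption $\kappa\leq\mu$, without which the passage from ``fewer than $\mu$'' to ``fewer than $\kappa$'' in the deletion condition would not go through. I would present the whole argument as a short single-paragraph proof, essentially in the two sentences above.
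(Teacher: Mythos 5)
Your argument is correct: both implications are exactly the definition-chasing the paper has in mind, which is why it records this as an unproved Observation. The two points you verify --- that a complete graph on $\mu$ vertices stays connected (indeed complete) after deleting fewer than $\mu$ vertices, and that $\mu$-connectedness implies $\kappa$-connectedness for $\kappa\leq\mu$ --- are precisely what is needed, and your flagging of the role of $\kappa\leq\mu$ is apt.
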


In light of ($**$) and the following proposition, we might view $\to_{hc}$ as a more satisfactory generalization of the positive Ramsey relations of ($*$) to the uncountably infinite:

\begin{proposition}\label{firstpositiverelation} If $\mu$ is an infinite cardinal and $k$ is a natural number then $\mu\to_{hc}(\mu)^2_k$.
\end{proposition}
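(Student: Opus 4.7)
My plan is to build the monochromatic highly connected subgraph in a single sweep using a uniform ultrafilter on $\mu$. Fix any such $U$; one exists in ZFC by extending the filter of subsets of $\mu$ whose complement has cardinality less than $\mu$. For each $v\in\mu$ the $k$ color-classes $N_i(v)=\{w\neq v:c(\{v,w\})=i\}$ partition the $U$-large set $\mu\setminus\{v\}$ into finitely many cells, so by the ultrafilter property exactly one of them belongs to $U$; call its index $\phi(v)$. A second such finite pigeonhole applied to $\phi:\mu\to k$ singles out a color $i^{*}<k$ with $X:=\phi^{-1}(i^{*})\in U$. Since $U$ is uniform, $|X|=\mu$.

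What remains is to verify that the color-$i^{*}$ subgraph on $X$ is highly connected. Given $S\subseteq X$ with $|S|<\mu$ and any $v,w\in X\setminus S$, I consider the triple intersection $T:=X\cap N_{i^{*}}(v)\cap N_{i^{*}}(w)$; each of the three factors lies in $U$ (because $v,w\in X=\phi^{-1}(i^{*})$ forces $N_{i^{*}}(v),N_{i^{*}}(w)\in U$), so $T\in U$ by closure under finite intersection, and consequently $|T|=\mu$. Hence $T\setminus S$ is nonempty, and any $u$ in it yields a color-$i^{*}$ path $v,u,w$ of length two with all three vertices in $X\setminus S$. So the color-$i^{*}$ graph on $X$ remains connected after the deletion of any fewer than $\mu$ vertices, which is precisely the definition of highly connected; taking $\xi=i^{*}$ and this $X$ witnesses $\mu\to_{hc}(\mu)^{2}_{k}$.

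I do not anticipate a significant technical obstacle. Uniform ultrafilters exist in ZFC on every infinite cardinal, both pigeonholes used to obtain $\phi$ and $i^{*}$ are of the finite variety, and I rely only on closure of $U$ under three-fold intersections. The finiteness of $k$ enters precisely at these two pigeonholes, which explains why the proposition is restricted to finitely many colors; the argument makes no use of whether $\mu$ is regular or singular, so it delivers the statement in the full generality claimed.
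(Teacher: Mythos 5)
Your argument is correct and is essentially the paper's own proof: fix a uniform ultrafilter, use it to assign each vertex the unique color whose neighborhood class is large, pigeonhole the finitely many colors to extract a large monochromatic-type set $X$, and connect any two vertices of $X$ through the ($\mu$-sized, hence not deletable) intersection of their color-$i^{*}$ neighborhoods with $X$. Your write-up just makes the deletion of a small set $S$ explicit where the paper leaves it implicit.
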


\begin{proof} Given a coloring $c:[\mu]^2\to k$, let $\mathcal D$ be a uniform ultrafilter on $\mu$ and define $f:\mu\to k$ by 
$$f(\alpha)=i\ \text{ if and only if }A_\alpha=\{\beta\in\mu\,:\,c(\{\alpha,\beta\})=i\}\in\mathcal D,$$
and let $i<n$ be such that the set $B=\{\alpha\in\mu: f(\alpha)=i\}$ is in $\mathcal D$. The set $B$ is highly connected. This is because for any $\alpha,\beta$ in $B$, the set $A_\alpha\cap A_\beta\cap B$ is in $\mathcal{D}$ and hence has cardinality $\mu$. Any $\gamma$ in $A_\alpha\cap A_\beta\cap B$ connects $\alpha$ and $\beta$ via the edge-colorings $c(\{\alpha,\gamma\})=c(\{\beta,\gamma\})=i$.
\end{proof}
 
The situation is considerably more complicated for infinitely many colors. Henceforth we will assume more set-theoretic background of the reader; we will focus as well on the relation $\to_{hc}$. This relation is subtle and significant in its own right, and we will tend to treat the finer relations $\to_{\kappa\text{-}c}$ as secondary, as mainly grading its failure. 

Perhaps the earliest result along these lines is Erd\H{o}s and Kakutani's theorem \cite{erdos-kakutani} that the complete graph on an infinite cardinal $\mu$ can be partitioned into $\lambda$ many trees if and only if $\mu\leq\lambda^{+}$. In consequence, the relation $\lambda^{+}\to_{hc}(\lambda^{+})^2_{\lambda}$ fails in the strongest possible respect: $\lambda^{+}\not\to_{2\text{-}c}(\lambda^{+})^2_{\lambda}$.

\begin{observation} $\mu\to_{1\text{-}c}(\mu)_\lambda^2$ holds for any cardinal $\lambda$  less than the cofinality of $\mu$.
\end{observation}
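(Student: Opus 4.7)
The plan is to exhibit, for some color $i^*<\lambda$, a connected component of size $\mu$ in the monochromatic graph $G_{i^*}:=(\mu,c^{-1}(i^*))$; its induced subgraph will then witness $\mu\to_{1\text{-}c}(\mu)_\lambda^2$. The key observation is that the cofinality hypothesis already forces such a component to appear as soon as one examines the neighborhoods of a single vertex, so no iterated pigeonhole or ultrafilter argument (as in Proposition~\ref{firstpositiverelation}) is required.

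Concretely, fix a coloring $c:[\mu]^2\to\lambda$ with $\lambda<\mathrm{cf}(\mu)$, and fix any $\alpha_0<\mu$. Partition $\mu\setminus\{\alpha_0\}$ by color of edge from $\alpha_0$:
\[
\mu\setminus\{\alpha_0\}=\bigcup_{i<\lambda}A_i,\qquad A_i:=\{\beta<\mu\,:\,c(\{\alpha_0,\beta\})=i\}.
\]
Since the left-hand side has size $\mu$ and $\lambda<\mathrm{cf}(\mu)$, at least one piece, call it $A_{i^*}$, has size $\mu$. I would then let $C$ be the connected component of $\alpha_0$ in $G_{i^*}$. By construction $A_{i^*}\cup\{\alpha_0\}\subseteq C$, so $|C|\geq\mu$; since $C\subseteq\mu$, we get $|C|=\mu$. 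Moreover the induced subgraph $(C,c^{-1}(i^*)\cap[C]^2)$ is connected, because any two $\beta,\gamma\in C$ are joined to $\alpha_0$ by $G_{i^*}$-paths all of whose vertices lie in $C$ (by maximality of the component), so concatenating gives a $\beta$-to-$\gamma$ path in the induced graph.

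There is no real obstacle here: the statement is sharp in the sense that it pushes the cofinality pigeonhole to its natural limit at a single vertex, and the closing step is just the tautology that the induced subgraph on a connected component is connected. The more delicate question, to be addressed in later sections, is what happens at $\lambda=\mathrm{cf}(\mu)$ and beyond, where this one-vertex argument collapses.
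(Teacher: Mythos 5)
Your argument is correct, and since the paper leaves this observation unproved, it supplies exactly the intended one-vertex pigeonhole: $\lambda<\mathrm{cf}(\mu)$ forces some color class $A_{i^*}$ of the edges at $\alpha_0$ to have size $\mu$, and the resulting monochromatic star already connects a size-$\mu$ set. The detour through the connected component is harmless but unnecessary --- taking $X=A_{i^*}\cup\{\alpha_0\}$ directly finishes the proof.
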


\begin{observation} The Erd\H{o}s-Kakutani coloring shows even that $\lambda^{+}\not\to_{hc}(\mu)^2_{\lambda}$ for any $\mu\geq 3$. \end{observation}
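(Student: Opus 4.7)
The plan is to invoke the Erd\H{o}s--Kakutani theorem cited just above: it yields a coloring $c:[\lambda^{+}]^{2}\to\lambda$ whose color classes $G_{i}=(\lambda^{+},c^{-1}(i))$ are each trees on $\lambda^{+}$, and in particular acyclic. Hence for every $X\in[\lambda^{+}]^{\mu}$ and every $i<\lambda$ the induced monochromatic subgraph $(X,c^{-1}(i)\cap[X]^{2})$ is a subgraph of a tree, and therefore itself acyclic. The problem thus reduces to verifying that no acyclic graph on $\mu\geq 3$ vertices can be highly connected.

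To establish that reduction I would split into two cases. When $\mu$ is finite, $(**)$ tells us that ``highly connected'' on $\mu$ vertices simply means $K_{\mu}$, which contains triangles whenever $\mu\geq 3$, contradicting acyclicity. When $\mu$ is infinite, a highly connected graph is in particular connected, so an acyclic highly connected graph on $X$ would be a tree $T$ on $|X|=\mu\geq 3$ vertices. Such a tree cannot be complete (complete trees have at most two vertices), so it possesses two vertices $u,w$ at graph-distance at least $2$; picking any intermediate vertex $v\neq u,w$ on the unique $u$-$w$ path in $T$, the removal of the single vertex $v$ separates $u$ from $w$. Since $1<\mu$, this contradicts high connectivity, completing the argument.

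The proof is almost entirely elementary graph theory, so there is no substantive obstacle; the only step worth flagging is that the induced monochromatic subgraph on $X$ need not itself be a tree, only a subgraph of one, i.e.\ a forest. One must therefore first use high connectivity to promote this forest to a single tree before exhibiting a cut vertex. Once that is done, the Erd\H{o}s--Kakutani coloring witnesses the failure of $\lambda^{+}\to_{hc}(\mu)^{2}_{\lambda}$ for every $\mu\geq 3$.
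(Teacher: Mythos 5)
Your argument is correct and is exactly the one the paper intends (the observation is stated without an explicit proof, but the surrounding text already points to the Erd\H{o}s--Kakutani partition into trees and the incompatibility of acyclicity with $2$-connectedness). Your case split --- finite $\mu\geq 3$ forces $K_\mu$ and hence a cycle, infinite $\mu$ forces a connected acyclic graph, i.e.\ a tree, which has a cut vertex once it has at least $3$ vertices --- correctly fills in the details.
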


Alternately, $\lambda^{+}\not\to_{hc}(\lambda^{+})^2_{\lambda}$ may be viewed as an instance of the following proposition, inspired by the \emph{Sierpi\'nski coloring} of \cite{sierpinski}:

\begin{proposition} \label{negative} If $\mu\leq 2^\lambda$ then $\mu\not\to_{hc}(\mu)^2_\lambda$.
\end{proposition}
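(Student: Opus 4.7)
The plan is to construct a Sierpiński-style $\lambda$-coloring that pairs a first-difference index with an order-compatibility bit. Since $\mu \leq 2^\lambda$, fix an injection $\mu \hookrightarrow 2^\lambda$, written $\alpha \mapsto f_\alpha$. For distinct $\alpha, \beta \in \mu$, set $\Delta(\alpha, \beta) = \min\{\xi < \lambda : f_\alpha(\xi) \neq f_\beta(\xi)\}$; letting $\gamma = \min\{\alpha, \beta\}$ in the ordinal order on $\mu$, set $s(\alpha, \beta) = f_\gamma(\Delta(\alpha, \beta)) \in \{0,1\}$; and, identifying $\lambda \times 2$ with $\lambda$, define $c:[\mu]^2 \to \lambda$ by $c(\{\alpha, \beta\}) = (\Delta(\alpha, \beta), s(\alpha, \beta))$.

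To verify that this witnesses $\mu \not\to_{hc}(\mu)^2_\lambda$, I would show that for each $\xi < \lambda$, each $s \in \{0,1\}$, and each $X \in [\mu]^\mu$, the graph $G_X := (X, c^{-1}((\xi, s)) \cap [X]^2)$ is not $\mu$-connected. First, every color-$(\xi, s)$ edge joins vertices in a common block $B_g := \{\gamma \in \mu : f_\gamma \restriction \xi = g\}$ (for some $g \in 2^\xi$), hence so does every color-$(\xi, s)$ path; so if $X$ meets two distinct blocks then $G_X$ is already disconnected. Thus $X \subseteq B_g$ for one $g$. Partitioning $X = X^0 \sqcup X^1$ according to the value $f_\cdot(\xi)$, the graph $G_X$ is bipartite between $X^0$ and $X^1$; if either side has size less than $\mu$ then deleting it leaves the other as an independent set of size $\mu$, destroying connectivity. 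So we may assume $|X^0| = |X^1| = \mu$.

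The crux of the argument uses the sign bit. Suppose $s = 0$; then the color-$(\xi, 0)$ edges of $G_X$ are exactly the pairs $\{\alpha, \beta\}$ with $\alpha \in X^0$, $\beta \in X^1$, and $\alpha < \beta$ (as ordinals). Let $\beta_0 = \min(X^1)$, an ordinal less than $\mu$. Every $G_X$-neighbor of $\beta_0$ lies in $X^0 \cap \beta_0$, a set of cardinality at most $|\beta_0| < \mu$. Deleting these fewer than $\mu$ vertices isolates $\beta_0$ inside a remaining graph of size $\mu \geq 2$, contradicting $\mu$-connectedness. The case $s = 1$ is handled symmetrically, using $\min(X^0)$.

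The principal obstacle is what one might call the $K_{\mu,\mu}$ hazard: with the naive first-difference coloring alone, each color class is a disjoint union of complete bipartite graphs $K_{B^0_g, B^1_g}$, and whenever some block has both $|B^0_g|, |B^1_g| \geq \mu$ (which is unavoidable for example when $\operatorname{cf}(\mu) > \lambda$) there is a monochromatic $K_{\mu, \mu}$, which is itself highly connected. The sign bit exists precisely to break this bipartite symmetry via the well-order on $\mu$, after which a small initial segment of one side supplies the disconnecting cut.
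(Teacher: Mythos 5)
Your proposal is correct and is essentially the paper's own argument: the coloring is the same Sierpi\'nski-style map $\{\alpha,\beta\}\mapsto(\Delta(\alpha,\beta),f_{\min\{\alpha,\beta\}}(\Delta(\alpha,\beta)))$, and the disconnection step is the same observation that a vertex whose $\xi$-th bit disagrees with the color's sign bit has all of its color-$(\xi,s)$ neighbors among the fewer than $\mu$ vertices below it. Your additional case analysis (blocks, bipartiteness) and the closing remark on why the sign bit is needed are sound but not essential to the contradiction.
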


\begin{proof} Let $\{\eta_\alpha: \alpha<\mu\}\subseteq 2^\lambda$ be a collection of pairwise distinct functions. Given $\alpha\neq\beta <\mu$ let 
$$\triangle_{\alpha,\beta}=\min\{\xi<\lambda: \eta_\alpha(\xi)\neq\eta_\beta(\xi)\}.$$
Then define $c:[\mu]^2\to \lambda\times 2$ by
$$c(\{\alpha,\beta\})= (\triangle_{\alpha,\beta}, i)\text{ if and only if } \alpha<\beta\text{ and } \eta_\alpha(\triangle_{\alpha,\beta})=i.$$
Aiming for a contradiction assume that  $A\subset \mu$ is highly connected in color $(\xi,i)$.  Let $\beta\in A$ be such that that $\eta_\beta(\xi)\neq i$ (there is such a $\beta$ since $A$ contains an edge). Let $B=A\cap\beta$. Then $\beta$  has no adjacent edges in $(A\setminus B, c^{-1}((\xi,i))\cap [A\setminus B]^2)$, a contradiction.
\end{proof}

By the following observation, Proposition \ref{negative} says even more.

\begin{lemma}\label{lemma} Let $\mu$ be the cofinality of $\nu$. Then $\mu\not\to_{hc}(\mu)_\lambda^2$ implies that $\nu\not\to_{hc}(\nu)_\lambda^2$.
\end{lemma}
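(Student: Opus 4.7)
The plan is to pull back a bad coloring from $\mu$ to $\nu$ along a projection whose fibers partition $\nu$ into pieces that are each too small to carry a highly connected subgraph on their own. Fix a strictly increasing cofinal sequence $\langle \nu_\alpha : \alpha < \mu\rangle$ in $\nu$ with $\nu_0 = 0$, and let $I_\alpha = [\nu_\alpha,\nu_{\alpha+1})$, so that $\nu = \bigsqcup_{\alpha<\mu} I_\alpha$ with $|I_\alpha|<\nu$ for every $\alpha$. Let $\pi:\nu\to\mu$ be the projection $\pi(x)=\alpha$ iff $x\in I_\alpha$. Starting from a witness $c_0:[\mu]^2\to\lambda$ of $\mu\not\to_{hc}(\mu)^2_\lambda$, I define
\[
c(\{x,y\}) = \begin{cases} c_0(\{\pi(x),\pi(y)\}) & \text{if } \pi(x)\neq\pi(y),\\ 0 & \text{if } \pi(x)=\pi(y).\end{cases}
\]
The intra-fiber color is irrelevant for the argument; any fixed color in $\lambda$ will do.

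Now I would argue by contradiction: suppose $A\in[\nu]^{\nu}$ is highly connected in some color $\xi<\lambda$ under $c$. Let $S=\pi(A)\subseteq\mu$. Since $\mu$ is regular, if $|S|<\mu$ then $\sup S<\mu$, so $A\subseteq\nu_{\sup S+1}$, a set of cardinality less than $\nu$, contradicting $|A|=\nu$. Hence $|S|=\mu$, and the goal is to show that $S$ is highly connected in color $\xi$ under $c_0$, which will contradict the choice of $c_0$. Given $T\subseteq S$ with $|T|<\mu$, regularity of $\mu$ again yields $\sup T<\mu$, and then $|\pi^{-1}(T)|\leq|\nu_{\sup T+1}|<\nu$. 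So $B:=A\setminus\pi^{-1}(T)$ still has size $\nu$, and by hypothesis $B$ is connected in the graph $(A,c^{-1}(\xi)\cap[A]^2)$. Pick any $\alpha,\beta\in S\setminus T$ and any $x_\alpha,x_\beta\in B$ with $\pi(x_\alpha)=\alpha$, $\pi(x_\beta)=\beta$. Any $\xi$-colored path $x_\alpha=y_0,y_1,\dots,y_n=x_\beta$ in $B$ projects to a sequence $\pi(y_0),\pi(y_1),\dots,\pi(y_n)$ in $S\setminus T$, and whenever $\pi(y_i)\neq\pi(y_{i+1})$ one has $c_0(\{\pi(y_i),\pi(y_{i+1})\})=c(\{y_i,y_{i+1}\})=\xi$; collapsing consecutive repeats yields a $c_0$-path in $S\setminus T$ of color $\xi$ connecting $\alpha$ to $\beta$. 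Thus $S$ is highly connected in color $\xi$ under $c_0$, the desired contradiction.

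The main obstacle is the verification that $S$ is $\mu$-connected, and the crux is the matching of scales between deletions upstairs and downstairs: removing fewer than $\mu$ elements from $S$ must correspond to removing fewer than $\nu$ elements from $A$. This is exactly what regularity of $\mu=\mathrm{cf}(\nu)$ together with the boundedness of the $\nu_\alpha$ delivers. Once this is set up, the translation of connectivity from $A$ (under $c$) to $S$ (under $c_0$) is straightforward because collapsing repeats in a projected path loses no color-$\xi$ edges, and the intra-fiber color plays no role.
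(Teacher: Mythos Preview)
Your proof is correct and follows the same construction as the paper: partition $\nu$ into $\mu$ pieces of size $<\nu$, and pull back the bad coloring on $\mu$ along the resulting projection. The paper's proof is a three-line sketch that simply asserts the pulled-back coloring works, whereas you supply the full verification (in particular the regularity argument matching deletions of size $<\mu$ downstairs to deletions of size $<\nu$ upstairs), but the underlying idea is identical.
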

\begin{proof} Let $c:[\mu]\rightarrow\lambda$ witness that $\mu\not\to_{hc}(\mu)_\lambda^2$. Let $\nu$ be the disjoint union of $\mu$ many sets $\nu_\alpha$, each of strictly smaller cardinality than $\nu$. A coloring $d:[\nu]^2\rightarrow\lambda$ for which $d(\xi,\eta)=c(\alpha,\beta)$ if $\xi\in\nu_\alpha$ and $\eta\in\nu_\beta$ and $\alpha\neq\beta$ will witness that $\nu\not\to_{hc}(\nu)_\lambda^2$.
\end{proof}
The core relation in Proposition \ref{negative} is $2^\lambda\not\to_{hc}(2^\lambda)_\lambda^2$. This is sharp, in the sense that the relation $(2^\lambda)^{+}\to_{hc}(2^\lambda)_\lambda^2$ does hold:\footnote{(Compare the Erd\H{o}s-Rado relation $(2^\lambda)^{+}\to(\lambda^{+})_\lambda^2$. It too is sharp, in the sense that $2^\lambda\not\to(\lambda^{+})_\lambda^2$, as witnessed by the Sierpi\'nski coloring.)}
\begin{proposition}\label{muplus} If $\mu=\mu^\lambda$ then $\mu^+\to_{hc}(\mu)^2_\lambda$.
\end{proposition}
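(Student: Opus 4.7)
The plan is to find, given $c\colon[\mu^+]^2\to\lambda$, a color $\xi<\lambda$ and disjoint sets $A,B\in[\mu^+]^\mu$ with $c(\{a,b\})=\xi$ for every $a\in A$ and $b\in B$. The color-$\xi$ subgraph on $A\cup B$ then contains a copy of $K_{\mu,\mu}$, which is highly connected: after deleting fewer than $\mu$ vertices, each part retains $\mu$ vertices and all cross-part edges are intact.

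First, I apply pigeonhole. The hypothesis $\mu=\mu^\lambda$ forces $\mathrm{cf}(\mu)>\lambda$ by K\"onig's theorem, so for each $\delta<\mu^+$ of cardinality $\mu$ the map $c(\cdot,\delta)\colon\delta\to\lambda$ admits a fiber $T_\delta\subseteq\delta$ of cardinality $\mu$ in some color $\xi_\delta<\lambda$. By regularity of $\mu^+$ (and $\lambda<\mu^+$), a second pigeonhole produces $\xi<\lambda$ and $D\in[\mu^+]^{\mu^+}$ with $\xi_\delta=\xi$ throughout $D$.

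Next, I use $\mu^\lambda=\mu$ once more to pick an elementary submodel $N\prec H(\theta)$ of cardinality $\mu$ containing $c,D,\xi$, with $\mu\subseteq N$ and $N^\lambda\subseteq N$. Set $\delta^*=\sup(N\cap\mu^+)$ and choose some $\delta^\bullet\in D$ above $\delta^*$. A pigeonhole inside $N$ applied to $c(\cdot,\delta^\bullet)\restriction(N\cap\mu^+)$ yields $A\subseteq N\cap\mu^+$ of cardinality $\mu$ on which $c(\cdot,\delta^\bullet)$ is constantly $\xi$. For $B$, I enumerate $[A]^{\leq\lambda}\cap N=\{F_i:i<\mu\}$, which has cardinality $\mu^\lambda=\mu$, with each $F_i\in N$ by $\lambda$-closure. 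At stage $i$, elementarity applied to the statement ``there exists $\delta\in D$ above $\sup F_i$ with $F_i\subseteq T_\delta$'' (witnessed by $\delta^\bullet$ in $H(\theta)$) supplies $\delta_i\in D\cap N$; I pick $\delta_i$ distinct from all earlier $\delta_j$, feasible because a further elementarity argument shows the pool of candidates has cardinality $\mu$ in $N$. Set $B=\{\delta_i:i<\mu\}$, discarding the at most $\lambda$ elements that accidentally lie in $A$ to secure $A\cap B=\emptyset$.

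For each pair $\{a,a'\}\subseteq A$, the subfamily of $F_i$ containing $\{a,a'\}$ has cardinality $\mu^\lambda=\mu$, furnishing $\mu$-many common $\xi$-neighbors in $B$; after any $<\mu$-sized deletion such a common $\xi$-neighbor survives. The main obstacle I anticipate is the case in which an entire $F_i$ lies in the deleted set, severing $\delta_i\in B$ from its designated $A$-neighbors: here a detour through some other surviving $b=\delta_j\in B$ and a common $\xi$-neighbor in $A$ shared between $\delta_i$ and $\delta_j$ restores connectivity, using the same abundance property applied to an appropriate $F_k$ simultaneously serving $\delta_i$ and $\delta_j$. The decisive use of the hypothesis $\mu=\mu^\lambda$ is precisely in supplying the equality $|[A]^{\leq\lambda}\cap N|=\mu$ that underwrites every instance of this abundance.
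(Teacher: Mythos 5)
Your overall strategy breaks down at the final, decisive step: the graph you build need not be highly connected. The opening paragraph promises a monochromatic $K_{\mu,\mu}$ on $A\cup B$, but the construction delivers much less: each $\delta_i\in B$ is only guaranteed to have the members of its designated set $F_i$ as color-$\xi$ neighbors in $A$, and $|F_i|\leq\lambda<\mu$. Deleting the at most $\lambda$ vertices of $F_i$ may therefore isolate $\delta_i$ outright in the color-$\xi$ graph on $A\cup B$, so that graph is not even provably $(\lambda+1)$-connected. The proposed repair --- a ``detour'' through some surviving $\delta_j$ --- cannot work, because a vertex all of whose neighbors have been deleted admits no detour; moreover your abundance property is asymmetric (it supplies many common neighbors in $B$ for small subsets of $A$, never common neighbors in $A$ for elements of $B$), so there is no ``$F_k$ simultaneously serving $\delta_i$ and $\delta_j$'' to appeal to. Whether a genuinely complete monochromatic $K_{\mu,\mu}$ must exist here is a polarized partition relation that your argument neither proves nor cites. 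There is also a secondary gap earlier: the pigeonhole applied to $c(\cdot,\delta^\bullet)\restriction(N\cap\mu^+)$ yields a $\mu$-sized fiber in \emph{some} color, not necessarily the color $\xi$ fixed by $D$ (the large $\xi$-fiber $T_{\delta^\bullet}$ may meet $N\cap\mu^+$ in a small set), and your elementarity step needs the stronger fact $A\subseteq T_{\delta^\bullet}$ in order for $\delta^\bullet$ to witness ``$\exists\delta\in D\,(F_i\subseteq T_\delta)$'' in $H(\theta)$.

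The fix is to make the common-neighbor property symmetric, which is what the paper does: it produces a single set $A$ such that \emph{every} pair of elements of $A$ has $\mu$ many common color-$i$ neighbors \emph{inside} $A$; high connectivity is then immediate, since after deleting fewer than $\mu$ vertices any two survivors still share a surviving neighbor. This is arranged with a continuous chain $\{M_\varepsilon:\varepsilon\leq\mu\}$ of $\lambda$-closed elementary submodels and a single pivot $\delta_\mu=M_\mu\cap\mu^+$ above them all: a reflection argument, using satisfiability of $\lambda$-ary conjunctions in each $M_{\varepsilon+1}$ to reflect $\lambda$ many requirements at once, shows that for each $\varepsilon$ there is a color $i(\varepsilon)$ such that every pair below $\delta_\varepsilon$ tied to $\delta_\mu$ in color $i(\varepsilon)$ has unboundedly many common $i(\varepsilon)$-neighbors in $[\delta_\varepsilon,\delta_{\varepsilon+1})$ that are themselves tied to $\delta_\mu$ in color $i(\varepsilon)$. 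Stabilizing $i(\varepsilon)=i$ on a $\mu$-sized set of indices and collecting the corresponding intervals' $i$-fibers of $c(\cdot,\delta_\mu)$ yields the desired $A$. Your elementary-submodel instincts point in the right direction, but without this self-contained common-neighbor property the bipartite scaffolding cannot be made $\mu$-connected.
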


\begin{proof} Given a coloring $c:[\mu^+]^2\to \lambda$, let 
$\{M_\varepsilon: \varepsilon\leq\mu\}$ be a continuous chain of size-$\mu$ elementary submodels of some large enough $H(\theta)$ such that
\begin{enumerate}
\item $\mu+1\cup\{c\}\subseteq M_0$,
\item $[M_{\varepsilon+1}]^\lambda\subseteq M_{\varepsilon+1}$ for every $\varepsilon<\mu$, and such that
\item every formula $\varphi \in L_{\mu^+,\,\mu^+}(\in)$ satisfiable in $H(\theta)$ is satisfiable in $M_{\varepsilon+1}$ for every $\varepsilon<\mu$.
\end{enumerate}
Let $\delta_\varepsilon=M_\varepsilon\cap \mu^+\in\mu^+$ for every $\varepsilon\leq \mu$.
The sequence  $\{\delta_\varepsilon: \varepsilon\leq\mu\}$ is continuous and strictly increasing.

The key observation is the following:

\begin{claim} \label{main} For every $\varepsilon<\mu$ there is an $i(\varepsilon)<\lambda$ such that $c(\{\alpha,\delta_\mu\})=c(\{\beta,\delta_\mu\})=i(\varepsilon)$ for some $\alpha,\beta<\delta_\varepsilon$ and such that for every such $\alpha$, $\beta$, the set 
$$\{\gamma\in[\delta_\varepsilon, \delta_{\varepsilon+1}): c(\{\alpha, \gamma\})=c(\{\beta,\gamma\})=c(\{\gamma,\delta_\mu\})=i=i(\varepsilon)\}$$
is unbounded in $\delta_{\varepsilon+1}$.
\end{claim}

\begin{proof} Aiming towards a contradiction assume that the claim fails for some $\varepsilon<\mu$. In other words, for every $i<\lambda$ there are $\alpha_i, \beta_i<\varepsilon$ such that $c(\{\alpha_i,\delta_\mu\})=c(\{\beta_i,\delta_\mu\})=i(\varepsilon)$, yet the set
$$\Gamma_i=\{\gamma\in[\delta_\varepsilon, \delta_{\varepsilon+1}): c(\{\alpha_i, \gamma\})=c(\{\beta_i,\gamma\})=c(\{\gamma,\delta_\lambda\})=i=i(\varepsilon)\}$$
is bounded in $\delta_{\varepsilon+1}$.
As $[M_{\varepsilon+1}]^\lambda\subseteq M_{\varepsilon+1}$, the cofinality of $\delta_{\varepsilon+1}$ is bigger than $\lambda$ so there is common upper bound $\zeta<\delta_{\varepsilon+1}$ for all $\Gamma_i$, $i<\lambda$.

Consider the conjunction $\varphi$ of the formula $\gamma>\zeta$ with the formulas $c(\{\alpha_i, \gamma\})=c(\{\beta_i,\gamma\})=i$, where $i$ ranges below $\lambda$. Then $\varphi$ is satisfiable in $H(\theta)$, $\delta_\mu$ being the witness, so by (3) some $\gamma\in [\zeta, \delta_{\varepsilon+1})$ witnesses its satisfaction in $M_{\varepsilon+1}$. Let $j=c(\{\gamma,\delta_\lambda\})$. Then $\gamma\in \Gamma_j$ and $\gamma >\zeta$, contradicting the fact that $\Gamma_j$ was bounded by $\zeta$.
\end{proof}

Let $i<\lambda$ be such that the set $W=\{ \varepsilon: i(\varepsilon)=i\}$ has size $\mu$
(such an $i$ exists by assumption: $\mu=\mu^\lambda$ implies that the cofinality of $\mu$ is greater than $\lambda$), and let
$A=\bigcup\{A_\varepsilon:\varepsilon \in W\}$, where
$$A_\varepsilon=\{\alpha\in[\delta_\varepsilon,\delta_{\varepsilon+1}): c(\{\alpha,\delta_\mu\})=i\}.$$
$A$ is then a subset of $\mu^+$ of size $\mu$ (in fact, $A_\varepsilon$ has size $\mu$ for every 
$\varepsilon \in W$). We claim that it is highly connected in the color $i$. To see this it suffices to prove that
if $\alpha$ and $\beta$ are distinct elements of $A$ then the set
$$\{\gamma\in A: c(\{\alpha,\gamma\})=c(\{\beta,\gamma\})=i\}$$
has size $\mu$. To see this, let $\varepsilon_1$ and $\varepsilon_2$ be elements of $W$ such that
$\alpha\in A_{\varepsilon_1}$ and $\beta\in A_{\varepsilon_2}$, and let $\varepsilon$ be an element of $W\setminus (\varepsilon_1+1\cup\varepsilon_2+1)$.  We wish to find a $\gamma\in [\delta_\varepsilon, \delta_{\varepsilon+1})$ such that
 $$c(\{\alpha,\gamma\})= c(\{\beta, \gamma\})=c(\{\gamma,\delta_\mu\})=i=i(\varepsilon).$$
Such a $\gamma$ exists by Claim \ref{main}.
\end{proof}

A number of questions now come into focus. Most immediate among them is:

\begin{question} \label{bigquestion} For $\lambda$ an infinite cardinal, what is the least cardinal $\mu$ for which it is consistent with the ZFC axioms that $\mu\to_{hc}(\mu)^2_{\lambda}$?
\end{question}

By Observation \ref{obvious}, $\mu\to_{hc}(\mu)^2_\lambda$ holds whenever $\mu$ is weakly compact, for any $\lambda<\mu$. As we have seen, though, $\to_{hc}$ holds in many cases where the classical arrow fails; hence we might reasonably hope for $\mu\to_{hc}(\mu)^2_\lambda$ on much smaller $\mu$. Necessarily, $2^\lambda$ must be smaller than any such $\mu$, by Proposition \ref{negative}. Is this alone enough? 

No. By the following, any instance of $\mu\to_{hc}(\mu)^2_{\lambda}$ will involve large cardinal assumptions.

\begin{definition}
For regular uncountable $\mu$, the principle $\square(\mu)$ is the assertion that there exists a sequence $\mathcal{C}=\langle C_\alpha\,|\,\alpha\in\mu\rangle$ such that\begin{itemize}
\item $C_\alpha$ is a closed unbounded subset of $\alpha$, for each $\alpha$. 
\item $C_\beta\cap \alpha=C_\alpha$, for every limit point $\alpha$ of $C_\beta$. 
\item No club $C\subseteq\mu$ satisfies $C\cap \alpha=C_\alpha$ at every limit point $\alpha$ of $C$. 
\end{itemize}
If in addition the following holds, we will call $\mathcal{C}$ a \textit{$\lambda$-stationary $\square(\mu)$-sequence}:
\begin{itemize}
\item The set $\{\alpha\in\mu\,|\,\text{otp}(C_\alpha)=\lambda\}$ is stationary in $\mu$.
\end{itemize}
\end{definition}

The following is immediate from \cite{jensen} together with Lemma 7.2.2 of \cite{todorcevic-walks}.

\begin{theorem} If $\lambda<\mu$ are infinite regular cardinals and $\mu$ is not Mahlo in the constructible universe, then there exists a $\lambda$-stationary $\square(\mu)$-sequence.
\end{theorem}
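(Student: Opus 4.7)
The plan is to combine the two cited ingredients exactly as the authors suggest. First, since $\mu$ is not Mahlo in $L$, the cited result of Jensen produces a sequence $\mathcal{C} = \langle C_\alpha : \alpha < \mu \rangle$ satisfying the coherence clauses of the definition of $\square(\mu)$, and satisfying the non-threadability clause in $V$. The coherence clauses are downward absolute from $L$; the nontrivial content is the absence of a thread in $V$, which follows from covering-style arguments --- any $V$-club $C \subseteq \mu$ threading $\mathcal{C}$ would, together with condensation in $L$, force $\mu$ to be weakly compact, and a fortiori Mahlo, in $L$.

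Next, I would apply Lemma 7.2.2 of \cite{todorcevic-walks} to modify $\mathcal{C}$ into a $\lambda$-stationary $\square(\mu)$-sequence. The lemma permits one to reassign $C_\alpha$ at a judiciously chosen stationary set $S \subseteq \mu$ --- concretely, a stationary set of $\alpha$ of cofinality $\lambda$ at which one may guarantee that $\alpha$ is a limit point of no $C_\beta$ in $\mathcal{C}$ --- replacing $C_\alpha$ by an arbitrary cofinal subset of order type $\lambda$. Coherence is preserved automatically, since the coherence requirement $C_\beta \cap \alpha = C_\alpha$ is imposed only at limit points $\alpha$ of $C_\beta$, and $S$ avoids such points by construction. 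Non-threadability is preserved because any $V$-club threading the modified sequence would, intersected with a club disjoint from $S$, yield a thread through the original $\mathcal{C}$, contradicting the first step.

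The principal obstacle is the first step: extracting a $\square(\mu)$-sequence that is unthreadable \emph{in $V$} from the $L$-level hypothesis of non-Mahloness. This is the hard core of \cite{jensen} and of its various subsequent refinements, and in the structure of the present argument it is simply invoked. Once the $\square(\mu)$-sequence is in hand, the refinement supplied by Todorcevic's lemma is a purely combinatorial manipulation of $\mathcal{C}$ and requires no further set-theoretic input, yielding the desired $\lambda$-stationary $\square(\mu)$-sequence.
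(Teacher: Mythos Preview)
Your proposal is correct and matches the paper's approach exactly: the paper gives no proof at all beyond the sentence ``The following is immediate from \cite{jensen} together with Lemma 7.2.2 of \cite{todorcevic-walks},'' and your sketch simply unpacks how those two ingredients combine. Your elaboration of the second step---modifying $\mathcal{C}$ on a stationary set of cofinality-$\lambda$ points that are accumulation points of no $C_\beta$---is precisely the content of Todorcevic's lemma, and your observation that non-threadability survives the modification is the routine verification the paper leaves implicit.
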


\begin{proposition}\label{square} If there exists a $\lambda$-stationary $\square(\mu)$-sequence, then $\mu\not\to_{hc}(\mu)^2_\lambda$.
\end{proposition}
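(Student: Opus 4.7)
The plan is to use the $\lambda$-stationary $\square(\mu)$-sequence $\mathcal{C}=\langle C_\alpha : \alpha<\mu\rangle$ to construct a coloring $c:[\mu]^2\to\lambda$ witnessing the failure of the relation. Let $S=\{\alpha<\mu : \text{otp}(C_\alpha)=\lambda\}$ be the stationary set supplied by the hypothesis, and for each $\delta\in S$ fix the order-isomorphism $e_\delta:\lambda\to C_\delta$.

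The coloring is defined via minimal walks on $\mathcal{C}$: for $\alpha<\beta<\mu$ set $\beta_0=\beta$ and $\beta_{k+1}=\min(C_{\beta_k}\setminus\alpha)$, terminating at $\beta_n=\alpha$. The value $c(\alpha,\beta)$ is an ordinal below $\lambda$ extracted from this walk at a step $\beta_k\in S$---for instance, $c(\alpha,\beta):=e_{\beta_k}^{-1}(\beta_{k+1})$ for a canonical such $k$ (with a default value of $0$ if no step passes through $S$). Channelling the coloring through $S$-steps is designed to leverage the coherence clause of $\square(\mu)$, namely that $C_\gamma=C_\alpha\cap\gamma$ at every limit point $\gamma$ of $C_\alpha$, which is precisely what distinguishes $\square(\mu)$ from the mere existence of a club sequence.

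Now suppose toward a contradiction that $A\in[\mu]^\mu$ and some $i<\lambda$ witness that the $i$-colored subgraph on $A$ is highly connected. Since $\{\delta<\mu : \delta=\sup(A\cap\delta)\}$ is a club and $|A\setminus\delta|=\mu$ for every $\delta<\mu$, stationarity of $S$ yields $\delta\in S$ with $\delta=\sup(A\cap\delta)$ and $|A\setminus(\delta+1)|=\mu$; a Fodor-style pressing-down argument on the position of $A$ relative to $e_\delta[\lambda]$ lets us further impose convenient alignment conditions. Setting $B\subseteq A$ to consist of a $\lambda$-sized (hence of size $<\mu$) neighborhood of $\delta$---essentially $(C_\delta\cup\{\delta\})\cap A$, perhaps thickened by an initial segment of $C_\delta$ determined by $i$---the aim is to show that $A\setminus B$ is disconnected in color $i$, with elements of $A\cap\delta$ separated from elements of $A$ above $\delta$.

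The heart of the argument---and the step on which I expect the main difficulty to lie---is a \emph{bottleneck lemma}: any $i$-colored edge $\{u,v\}\in[A]^2$ with $u<\delta<v$ forces $u$ (or some vertex on any potential replacement path) into $B$. This is where coherence is used in earnest: once the walk from $v$ down to $u$ first encounters an ordinal $w$ of which $\delta$ is a limit point, the identity $C_w\cap\delta=C_\delta$ means the remainder of the walk coincides with a walk inside $C_\delta$, rigidly tying the color value to the position of $u$ within $e_\delta[\lambda]$ and constraining $u$ to a $\lambda$-sized set absorbable into $B$. Granting the lemma, no $i$-colored path in $A\setminus B$ can cross $\delta$, contradicting high connectedness since $|B|<\mu$.
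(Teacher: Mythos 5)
Your strategy --- extract a single ordinal below $\lambda$ from the minimal walk at a step lying in $S$, then disconnect a putative highly connected $A$ at some $\delta\in S$ with $\delta=\sup(A\cap\delta)$ by deleting a $\lambda$-sized ``bottleneck'' $B$ --- is genuinely different from the paper's, and it has a real gap at exactly the point you flag: the bottleneck lemma is not proved, and for the coloring as you specify it, it is false. The problem is that the $S$-step at which the color is read need not have anything to do with $\delta$. Take the ``first $S$-step'' reading and a $v\in S$ with $v>\delta$: then $k=0$ and $c(u,v)=e_v^{-1}\bigl(\min(C_v\setminus u)\bigr)$, so $c(u,v)=i$ exactly when $u$ lies in the interval $\bigl(\sup_{j<i}e_v(j),\,e_v(i)\bigr]$. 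These intervals depend on $v$, not on $\delta$, and as $v$ ranges over $S\setminus(\delta+1)$ their union can exhaust $\delta$ (for instance, whenever $\min C_v\ge\delta$ one gets $c(u,v)=0$ for \emph{every} $u\le\min C_v$, hence for every $u<\delta$). So the set of $u<\delta$ admitting an $i$-colored edge to some $v>\delta$ is in general unbounded in $\delta$ and of size $|\delta|$, not a set of size $\lambda$ absorbable into $B$; the ``last $S$-step'' reading fails for the same reason. Coherence of $\mathcal{C}$ controls the tail of a walk only after it passes through (or through a club agreeing with) $C_\delta$, and nothing in the definition forces the color to be extracted there. A secondary issue: even granting such a $B$, when $|A\cap\delta|\le\lambda$ (possible, e.g., for $\mu=\lambda^+$) deleting $B$ may simply remove all of $A\cap\delta$ and no disconnection results, so you would also need to choose $\delta$ with $|A\cap\delta|>|B|$.

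The paper sidesteps all of this by coloring with Todorcevic's $\rho^\lambda$, whose decisive feature is two-sided subadditivity: $\rho^{\lambda}(\alpha,\beta)\le\max\{\rho^{\lambda}(\alpha,\gamma),\rho^{\lambda}(\beta,\gamma)\}$ and $\rho^{\lambda}(\alpha,\gamma)\le\max\{\rho^{\lambda}(\alpha,\beta),\rho^{\lambda}(\beta,\gamma)\}$. These make each relation $\alpha<^\lambda_\xi\beta$ (meaning $\alpha<\beta$ and $\rho^\lambda(\alpha,\beta)\le\xi$) a tree ordering with no chain of length $\mu$ (this is where $\lambda$-stationarity enters, via the cited Lemma 7.2.9), and then a color-$\xi$ path from $\alpha$ to $\beta$ inside $A\setminus\alpha$ forces $\rho^\lambda(\alpha,\beta)\le\xi$, so a highly connected color-$\xi$ class would be a $\mu$-chain in $<^\lambda_\xi$. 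Subadditivity is what converts \emph{connectivity} (arbitrary paths) into a constraint on single pairs; a single value read off a walk lacks this property, which is why your key lemma resists proof. To repair your argument you would essentially have to rebuild such a subadditive characteristic, i.e., reconstruct $\rho^\lambda$.
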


\begin{proof} The ``bad'' coloring $c:[\mu]^2\to\lambda$ will be $c(\alpha,\beta)=\rho^{\lambda}(\alpha,\beta)$, where $\rho^{\lambda}$ is Todorcevic's local rho function, defined in reference to some $\lambda$-stationary $\square(\mu)$-sequence $\mathcal{C}$. Readers are referred to \cite{todorcevic-walks} \textsection 7.2 for further information. The decisive features of $\rho^{\lambda}$ for our purposes are the following: for all $\alpha<\beta<\gamma<\mu$,
\begin{align}
\label{one} \rho^{\lambda}(\alpha,\beta) \leq & \:\text{max}\{\rho^{\lambda}(\alpha,\gamma),\rho^{\lambda}(\beta,\gamma)\},\text{ and} \\ \label{two} \rho^{\lambda}(\alpha,\gamma) \leq & \:\text{max}\{\rho^{\lambda}(\alpha,\beta),\rho^{\lambda}(\beta,\gamma)\}
\end{align}
In consequence, for all $\xi<\lambda$, the relation
$$\alpha<_\xi^\lambda\beta\;\text{ iff }\;\alpha<\beta\text{ and } \rho^{\lambda}(\alpha,\beta)\leq\xi$$ is a tree-ordering on $\mu$. By our assumptions about $\mathcal{C}$ and  Lemma 7.2.9 of \cite{todorcevic-walks}, none of the orderings $<_\xi^\lambda$ contains a chain of length $\mu$.

Now suppose towards contradiction that $A\in [\mu]^\mu$ is highly connected in the color $\xi$. By the above, there exist $\alpha<\beta$ in $A$ with $\alpha\nless_\xi^\lambda\beta$. By highly-connectedness, some color-$\xi$ path $\alpha=\alpha_0$ to $\alpha_1$ to $\dots$ to $\alpha_j$ to $\alpha_{j+1}=\beta$ must connect $\alpha$ and $\beta$ in $A\backslash\alpha$. It then follows from successive applications of (\ref{one}) and (\ref{two}) above that
$$\rho^{\lambda}(\alpha,\beta)\leq\:\max_{i\leq j}\rho^{\lambda}(\alpha_i,\alpha_{i+1})=\xi$$ This implies that $\alpha<_\xi^\lambda\beta$, a contradiction.
\end{proof}

\begin{corollary} It is consistent with the ZFC axioms --- and even with ZFC+GCH --- that $\mu\not\to_{hc}(\mu)_\lambda^2$ for all infinite cardinals $\lambda<\mu$.
\end{corollary}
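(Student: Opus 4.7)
The plan is to work inside a model of ZFC${}+V=L$ in which no cardinal is Mahlo; this is consistent relative to $\mathrm{Con}(\mathrm{ZFC})$, since being Mahlo is a large-cardinal assumption strictly stronger than ZFC. In such a model GCH holds, and since $V=L$ every regular uncountable cardinal $\mu$ fails to be Mahlo in $L$. Given arbitrary infinite $\lambda<\mu$, I would then derive $\mu\not\to_{hc}(\mu)^2_\lambda$ by combining the Theorem cited immediately above, Proposition \ref{square}, and Lemma \ref{lemma} in a short case analysis.

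First suppose $\mu$ is regular (hence uncountable). If $\lambda$ is also regular, the Theorem supplies a $\lambda$-stationary $\square(\mu)$-sequence, and Proposition \ref{square} yields $\mu\not\to_{hc}(\mu)^2_\lambda$. If instead $\lambda$ is singular, let $\lambda'=\mathrm{cf}(\lambda)$; then $\lambda'<\lambda<\mu$ is infinite and regular, so the previous subcase gives a bad coloring $c:[\mu]^2\to\lambda'$ witnessing $\mu\not\to_{hc}(\mu)^2_{\lambda'}$, and composing $c$ with any injection $\lambda'\hookrightarrow\lambda$ produces a coloring $[\mu]^2\to\lambda$ with the same color classes (plus empty ones), hence a witness to $\mu\not\to_{hc}(\mu)^2_\lambda$ as well.

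Now suppose $\mu$ is singular and set $\kappa=\mathrm{cf}(\mu)<\mu$. If $\lambda<\kappa$, then $\kappa$ is regular uncountable (as $\kappa>\lambda\geq\aleph_0$), so the regular case above, applied to $\kappa$ in place of $\mu$, yields $\kappa\not\to_{hc}(\kappa)^2_\lambda$; Lemma \ref{lemma} then upgrades this to $\mu\not\to_{hc}(\mu)^2_\lambda$. If instead $\lambda\geq\kappa$, observe that $|[\kappa]^2|=\kappa\leq\lambda$, so assigning distinct elements of $\lambda$ to distinct pairs in $[\kappa]^2$ produces a ``rainbow'' coloring in which no two edges share a color, trivially witnessing $\kappa\not\to_{hc}(\kappa)^2_\lambda$, and Lemma \ref{lemma} again concludes.

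The main point to watch is that the Theorem requires both $\lambda$ and $\mu$ to be regular, and that for singular $\mu$ with $\mathrm{cf}(\mu)\leq\lambda<\mu$ one cannot simply feed $\mathrm{cf}(\mu)$ into the regular case; the former is circumvented by passing to $\mathrm{cf}(\lambda)$, and the latter by the elementary rainbow coloring above. No other obstacle is anticipated.
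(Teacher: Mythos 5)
Your proof is correct and follows essentially the same route as the paper: pass to a model of ZFC${}+V{=}L$ with no Mahlo cardinals, use the $\lambda$-stationary $\square(\mu)$-sequences supplied by the quoted Theorem together with Proposition \ref{square} for regular $\lambda<\mu$, reduce singular $\mu$ to $\mathrm{cf}(\mu)$ via Lemma \ref{lemma}, and handle singular $\lambda$ by monotonicity in the number of colors. Your explicit treatment of the subcase $\mathrm{cf}(\mu)\le\lambda<\mu$ for singular $\mu$ (via the rainbow coloring, which is also an instance of Proposition \ref{negative} since $\mathrm{cf}(\mu)\le 2^\lambda$) fills in a step that the paper's one-line appeal to Lemma \ref{lemma} leaves implicit.
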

\begin{proof} By Proposition \ref{square}, the relation $\mu\not\to_{hc}(\mu)_\lambda^2$ holds for any infinite regular cardinals $\lambda<\mu$ in a model of ZFC+(V=L)+``there exist no Mahlo cardinals.'' It will hold then for any singular $\mu$ by Lemma \ref{lemma}. Observe finally that if $\mu\not\to_{hc}(\mu)_\lambda^2$ failed for any singular $\lambda$, it would fail as well for some smaller regular $\lambda$, contradicting our premise.
\end{proof}

We now show in the opposite direction that, assuming  the existence of a weakly compact cardinal above some $\mu>\lambda$, it is consistent with the ZFC axioms that $2^{\mu}\to_{hc}(2^{\mu})^2_{\lambda}$.

Instrumental in the argument is the following ``two-dimensional delta system lemma'' of more general utility.

\begin{definition} A family of sets $\mathcal{A}$ is a  \textnormal{$\Delta$-system} if there exists a fixed $r$ such that $a\cap b =r$ for every distinct $a$ and $b$ in $\mathcal{A}$. This $r$ is called the \textnormal{root} of the $\Delta$-system.
\end{definition}

\begin{lemma} \label{deltasystemsystem}
Let $\nu$ be weakly compact and let $\mu$ be less than $\nu$. Then for any family $\{u_{\alpha,\beta}\,:\,\alpha<\beta<\nu\}\subseteq [\nu]^{\leq\mu}$ there exists a $B\in [\nu]^\nu$ such that:
\begin{enumerate} 
\item For each $\alpha\in B$, the set $\{ u_{\alpha,\beta}\,:\,\beta\in B\,\backslash\,(\alpha+1)\}$ is a $\Delta$-system, with root $V_\alpha^{+}$.
\item For each $\beta\in B$, the set $\{ u_{\alpha,\beta}\,:\,\alpha\in B\cap\beta\}$ is a $\Delta$-system, with root $V_\beta^{-}$.
\item The sets $\{V_\alpha^{+}\,:\,\alpha\in B\}$ and $\{V_\alpha^{-}\,:\,\alpha\in B\}$ and $\{V_\alpha^{-}\cup V_\alpha^{+}\,:\,\alpha\in B\}$ each form $\Delta$-systems.
\item The elements of the set $\{u_{\alpha,\beta}\,\backslash(V_\alpha^{+}\cup V_\beta^{-})\,:\,\alpha<\beta\textnormal{ in }B\}$ are pairwise disjoint.
\end{enumerate}
\end{lemma}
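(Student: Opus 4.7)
The plan is to refine the index set $B$ in successive stages, at each stage invoking either the classical $\Delta$-system lemma (available at the inaccessible cardinal $\nu$) or the partition property $\nu \to (\nu)^n_\kappa$ (valid for all $n < \omega$ and $\kappa < \nu$ when $\nu$ is weakly compact).

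I would begin by fixing the ``order type'' of the pair $(u_{\alpha,\beta},\alpha,\beta)$: for each $\alpha < \beta$ in $\nu$, the linearly ordered structure $u_{\alpha,\beta} \cup \{\alpha,\beta\}$ with two distinguished points is isomorphic to one of fewer than $\nu$ many possibilities (at most $\mu^3$, after recording the positions of $\alpha$ and $\beta$ and the size of $u_{\alpha,\beta}$). Applying weak compactness extracts a homogeneous $B_1 \in [\nu]^\nu$ on which $|u_{\alpha,\beta}|$ has a fixed value $\mu_0 \le \mu$, and for each $\xi < \mu_0$ the $\xi$-th term of the increasing enumeration of $u_{\alpha,\beta}$ has a fixed ``position'' (below $\alpha$, at $\alpha$, between $\alpha$ and $\beta$, at $\beta$, or above $\beta$). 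This partitions the index set as $\mu_0 = S \sqcup E_\alpha \sqcup M \sqcup E_\beta \sqcup L$.

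Next, to establish (1) and (2), for each $\alpha \in B_1$ the family $\{u_{\alpha,\beta} : \beta \in B_1 \cap (\alpha,\nu)\}$ consists of $\nu$ sets of size $<\nu$, so by the $\Delta$-system lemma it contains a subfamily indexed by some $C_\alpha^+ \in [B_1 \cap (\alpha,\nu)]^\nu$ forming a $\Delta$-system with root $V_\alpha^+$; choose $C_\alpha^+$ and $V_\alpha^+$ canonically (for example by taking the lex-least admissible root whose pre-image has size $\nu$). Symmetrically define $C_\beta^- \in [B_1 \cap \beta]^\nu$ with roots $V_\beta^-$. I would then diagonalize: color pairs $\alpha < \beta$ in $B_1$ by the element of $\{0,1\}^2$ recording $\mathbf{1}[\beta \in C_\alpha^+]$ and $\mathbf{1}[\alpha \in C_\beta^-]$, and extract a homogeneous $B_2 \in [B_1]^\nu$ using $\nu \to (\nu)^2_4$. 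A density argument, using that each $C_\alpha^+$ and $C_\beta^-$ is of size $\nu$ and the canonicity of these choices (so that their ``traces'' are preserved under restriction to any further size-$\nu$ subset defined by the coloring itself), rules out all colors other than $(1,1)$, yielding (1) and (2) with the prescribed roots.

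Finally, (3) is obtained by three further applications of the classical $\Delta$-system lemma to the $\nu$-sized families $\{V_\alpha^+ : \alpha \in B_2\}$, $\{V_\alpha^- : \alpha \in B_2\}$, and $\{V_\alpha^+ \cup V_\alpha^- : \alpha \in B_2\}$, yielding some $B_3 \in [B_2]^\nu$. For (4), color each $4$-tuple $\alpha_1 < \alpha_2 < \alpha_3 < \alpha_4$ from $B_3$ by the pattern of pairwise intersections of the six remainders $u_{\alpha_i,\alpha_j} \setminus (V_{\alpha_i}^+ \cup V_{\alpha_j}^-)$ (a bounded-cardinality invariant, since each remainder has size $< \mu$ and, modulo the normalizations already imposed, there are fewer than $\nu$ admissible patterns), and pass to a homogeneous $B = B_4 \in [B_3]^\nu$ via $\nu \to (\nu)^4_\kappa$; a direct consistency argument forces the homogeneous color to be the ``pairwise disjoint'' pattern, since any nontrivial overlap would repeatedly produce elements of the root that the $\Delta$-system structure already forbids. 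The main obstacle is the diagonalization step producing $B_2$: the weak compactness of $\nu$ is used in an essential way to combine the per-row and per-column $\Delta$-system witnesses into a single uniform set, and the canonical selection of the $C_\alpha^\pm$ must be delicate enough that the wrong homogeneous colors really are ruled out. The other steps are elaborations of standard $\Delta$-system and Ramsey-theoretic techniques.
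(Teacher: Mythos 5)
There is a genuine gap at the step you yourself flag as the main obstacle: the amalgamation of the row-wise and column-wise $\Delta$-system witnesses. Having chosen, for each $\alpha$, a set $C_\alpha^+\in[B_1\setminus(\alpha+1)]^\nu$ on which the row forms a $\Delta$-system, you color pairs by $\bigl(\mathbf{1}[\beta\in C_\alpha^+],\,\mathbf{1}[\alpha\in C_\beta^-]\bigr)$ and assert that the homogeneous color must be $(1,1)$. Nothing forces this. To rule out the color $(0,j)$ you would need to show that \emph{every} $Z\in[\nu]^\nu$ contains a pair $\alpha<\beta$ with $\beta\in C_\alpha^+$; but $C_\alpha^+$ is merely a size-$\nu$ subset of $\nu$ (not a club, and not definable from the coloring in any way that survives thinning), and a size-$\nu$ set $Z$ can perfectly well be disjoint from $C_{\min Z}^+$ above $\min Z$. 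The appeal to ``canonicity'' of the choice of $C_\alpha^+$ does not repair this: the sets $C_\alpha^\pm$ are fixed before $B_2$ is extracted, and homogeneity of the pair-coloring gives no leverage over where $B_2$ sits relative to them. So items (1) and (2) are not established, and (3) and (4) depend on them.

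The fix --- and the route the paper takes --- is to avoid choosing row-wise witnesses altogether. Color increasing $4$-tuples $a=\{\alpha_0,\alpha_1,\alpha_2,\alpha_3\}$ by the full order-isomorphism type of the configuration $U_a=a\cup\bigcup_{j<k}u_{\alpha_j,\alpha_k}$, recording the images of the four points and of each $u_{\alpha_j,\alpha_k}$ under the collapse of $U_a$ to its order type; there are at most $2^\mu<\nu$ colors, so $\nu\to(\nu)^4_{2^\mu}$ applies. On a monochromatic $A$, homogeneity across the tuples $\{\alpha,\beta,\gamma,\varepsilon\}$, $\{\alpha,\beta,\delta,\varepsilon\}$, $\{\alpha,\gamma,\delta,\varepsilon\}$ forces $u_{\alpha,\beta}\cap u_{\alpha,\gamma}=u_{\alpha,\beta}\cap u_{\alpha,\delta}=u_{\alpha,\gamma}\cap u_{\alpha,\delta}$ for all $\beta<\gamma<\delta$ in $A$ above $\alpha$, which is exactly the statement that the entire row $\{u_{\alpha,\beta}:\beta\in A\setminus(\alpha+1)\}$ is a $\Delta$-system --- no amalgamation is ever needed, and (2), (3), (4) fall out of the same homogeneity by similar chains of equivalences. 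Your first homogenization (fixing the order type of $(u_{\alpha,\beta},\alpha,\beta)$) and your treatment of (4) via coloring $4$-tuples are in the right spirit; the essential missing idea is that the $\Delta$-system structure itself must be extracted by a single higher-dimensional coloring rather than assembled from separately chosen one-dimensional witnesses.
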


In what follows, the relation $\alpha<\beta$ will sometimes be left implicit; it is assumed to hold in any expression conjoining $\alpha$ and $\beta$.

\begin{proof}
By the weak compactness of $\nu$, we may begin by assuming all $u_{\alpha,\beta}$ to be of the same order-type.

Define the coloring $d:[\nu]^4\to H(\mu^+)$ as follows: for any increasing $a=\{\alpha_0, \alpha_1,\alpha_2, \alpha_3\}$ let 
 $$U_a=\,a\,\cup\bigcup_{j<k<4}u_{\alpha_j,\alpha_k}.$$ 
and let
$$d(a)=\langle otp(U_a), \langle\xi_i:i<4\rangle, \langle v_{j,k}:j<k<4\rangle\rangle,$$
so that if $h$ is the unique order-isomorphism between $U_a$ and $otp(U_a)$, then
\begin{itemize}
\item $h(\alpha_i)=\xi_i$ for every $i<4$, and
\item $h[u_{\alpha_j,\alpha_k}]=v_{j,k}$ for every $j<k<4$.
\end{itemize}
By the weak compactness of $\nu$, there exists a $d$-monochromatic $A\in [\nu]^\nu$. We argue most of the lemma for this set $A$, thinning to a $B\in[A]^\nu$ only later if necessary.

Consider $\alpha<\beta<\gamma<\delta<\varepsilon$ in $A$. Since $d(\alpha,\beta,\gamma,\varepsilon)=d(\alpha,\beta,\delta,\varepsilon)=d(\alpha,\gamma,\delta,\varepsilon)$,
\[\tag{$\dagger$}\xi\in u_{\alpha\beta}\cap u_{\alpha\gamma}\,\;\Leftrightarrow\,\;\xi\in u_{\alpha\beta}\cap u_{\alpha\delta}\,\;\Leftrightarrow\,\;\xi\in u_{\alpha\gamma}\cap u_{\alpha\delta}\]
As $\varepsilon$ was arbitrary, this implies item (1) of the lemma. We might usefully note more: a $\xi$ as in ($\dagger)$ must sit at the same relative height in each $u_{\alpha,\beta}$, for $\beta\in A\,\backslash(\alpha+1))$. Pigeonhole arguments together with $(\dagger)$ then ensure that any lesser elements of $u_{\alpha,\beta}$ also fall in the root $V_\alpha^{+}$ of the $\Delta$-system $\{ u_{\alpha,\beta}\,:\,\beta\in A\,\backslash\,(\alpha+1)\}$. In other words, $V_\alpha^{+}$ is an initial segment of each such $u_{\alpha,\beta}$.

Item (2) of the lemma is similar. (Pigeonhole arguments are not available in this case, hence the root $V_\beta^{-}$ is not so easily characterized.)

To see that $\{V_\alpha^{+}\,:\,\alpha\in A\}$ forms a $\Delta$-system with root $r^{+}$, observe that
\begin{align*}
\xi\in V_\beta^{+}\cap V_\gamma^{+} & \;\Rightarrow\; \xi\in u_{\beta,\varepsilon}\cap u_{\gamma,\varepsilon}\textnormal{ for any }\varepsilon\in A\,\backslash(\gamma+1)\\& \;\Rightarrow\; \xi\in V_\varepsilon^{-}\\ & \;\Rightarrow\; \xi\in u_{\alpha,\varepsilon}\textnormal{ for any }\alpha\in A\cap\varepsilon
\end{align*}
As $\varepsilon$ is arbitrary, this implies that $\xi$ is in $V_\alpha^{+}$. As $\alpha$ is arbitrary, this completes the argument. 

The argument that $\{V_\alpha^{-}\,:\,\alpha\in A\}$ forms a $\Delta$-system with root $r^{-}$ is essentially identical (but may require the omission of the first two elements of $A$).

Finally, note that $otp(V_\alpha^{+})$, $otp(V_\beta^{+})$, $otp(V_\gamma^{-})$, $otp(V_\delta^{-})$ are all legible from $d(\alpha,\beta,\gamma,\delta)$. In consequence:
\begin{enumerate}
\item[(i)] $V_\beta^{+}\cap V_\gamma^{-}$ is of the same order-type for all $\beta<\gamma$ in $A$. Hence this intersection must be of the form $r^{+}\cap\, r^{-}$. Thin $A$ if necessary to a $B\in [A]^\nu$ such that $V_\beta^+\backslash r^+ \cap V_\gamma^{-}\backslash r^-=\varnothing$ for all $\beta\geq\gamma$ in $B$. Then $\{V_\alpha\,:\,\alpha\in B\}$ forms a $\Delta$-system with root $r=r^{-}\cup r^{+}$.
\item[(ii)] The family $\{u_{\alpha,\beta}\,\backslash(V_\alpha^{+}\cup V_\beta^{-})\,:\,\alpha<\beta\textnormal{ in }B\}$ is pairwise disjoint. For by the homogeneity of $B$, any $\xi$ in $(u_{\beta,\gamma}\backslash V_\beta^{+}\cup V_\gamma^{-})\cap (u_{\alpha,\delta}\backslash V_\alpha^{+}\cup V_\delta^{-})$ is necessarily also in $(u_{\beta,\gamma}\backslash V_\beta^{+}\cup V_\gamma^{-})\cap (u_{\alpha,\varepsilon}\backslash V_\alpha^{+}\cup V_\varepsilon^{-})$ for any $\varepsilon\neq\delta$ in $B$. But this implies that $\xi\in V_\alpha^+$, a contradiction. Similarly for any other configuration of $\alpha,\beta,\gamma$, and $\delta$. 
\end{enumerate}
These establish items (3) and (4) of the lemma.
\end{proof}

\begin{remark} Two further features of the above system will be useful below:
\begin{itemize}
\item $V_\alpha^{+}$ are all of the same order-type, for $\alpha\in B$. Similarly for $V_\alpha^{-}$.
\item As each $\alpha$ in $B$ sits at the same distinguished relative location in $V_\alpha:= V_\alpha^{-}\cup V_\alpha^{+}$ we have $|\{otp(V_\alpha)\,:\,\alpha\in A\}|=1$ as well.
\end{itemize}
\end{remark}

\begin{proposition}\label{weaklycompact1} Let $\nu$ be a weakly compact cardinal, and let $\lambda<\mu=\mu^{<\mu}<\nu$ be given.
Then there is a cardinal-preserving forcing $\mathbb P$ such that 
$$\Vdash_{\mathbb P}\text{`` }2^\mu=\nu\text{ and }\nu\to_{hc}(\nu)^2_\lambda\text{''}.$$
\end{proposition}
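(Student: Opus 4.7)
The plan is to take $\mathbb{P}=\mathrm{Add}(\mu,\nu)$, the Cohen poset of partial functions $\nu\times\mu\to 2$ of size $<\mu$. Since $\mu=\mu^{<\mu}$, $\mathbb{P}$ is ${<}\mu$-closed and $\mu^{+}$-cc (the latter by the classical $\Delta$-system lemma in $V$), hence cardinal-preserving, and clearly forces $2^\mu=\nu$. The content is to verify $\mathbb{P}\Vdash\nu\to_{hc}(\nu)^2_\lambda$.

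Let $\dot c$ be a $\mathbb{P}$-name for a coloring $[\nu]^2\to\lambda$. For each pair $\{\alpha,\beta\}\in[\nu]^2$ fix a maximal antichain $A_{\alpha,\beta}\subseteq\mathbb{P}$ whose elements each decide $\dot c(\alpha,\beta)$; by $\mu^+$-cc, $|A_{\alpha,\beta}|\leq\mu$. Call $u_{\alpha,\beta}\in[\nu]^{\leq\mu}$ the projection to $\nu$ of $\bigcup\{\mathrm{dom}(p):p\in A_{\alpha,\beta}\}$, and write $\tau_{\alpha,\beta}$ for the induced nice name. Apply Lemma \ref{deltasystemsystem} in $V$ (where $\nu$ is weakly compact) to $\{u_{\alpha,\beta}\}$, extracting $B\in[\nu]^\nu$ with items (1)--(4): common root $r=r^-\cup r^+$ of $\{V_\alpha=V_\alpha^-\cup V_\alpha^+\}$, and pairwise-disjoint ``cores'' $w_{\alpha,\beta}=u_{\alpha,\beta}\setminus(V_\alpha^+\cup V_\beta^-)$. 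Define a secondary coloring $e\colon[B]^2\to 2^\mu$ sending $\{\alpha,\beta\}$ to the isomorphism type of $(u_{\alpha,\beta},\tau_{\alpha,\beta})$ after canonical collapse $u_{\alpha,\beta}\to\mathrm{otp}(u_{\alpha,\beta})$; collapsed names live in $\mathrm{Add}(\mu,\mathrm{otp}(u_{\alpha,\beta}))$, of which there are at most $2^\mu<\nu$ types, so weak compactness ($\nu\to(\nu)^2_{2^\mu}$) yields $B'\in[B]^\nu$ on which $e$ is constant. Hence a single template $\tau^*$ computes $\dot c(\alpha,\beta)$ via the canonical order-isomorphism, uniformly for $\{\alpha,\beta\}\in[B']^2$.

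Now pass to $V[G]$. The block decomposition $u_{\alpha,\beta}=r\sqcup(V_\alpha^+\setminus r)\sqcup(V_\beta^-\setminus r)\sqcup w_{\alpha,\beta}$ means $c(\alpha,\beta)=F(g_r,g^+_\alpha,g^-_\beta,g^0_{\alpha,\beta})$ for a fixed $F$ determined by $\tau^*$, with the core-generics $g^0_{\alpha,\beta}$ jointly Cohen-independent over the remainder of $G$. Since $\lambda<\mathrm{cf}(\nu)=\nu$, for each $\alpha\in B'$ some $\xi<\lambda$ has $|N_\alpha^\xi|=\nu$, where $N_\alpha^\xi:=\{\gamma\in B':\gamma>\alpha,\,c(\alpha,\gamma)=\xi\}$; set $f(\alpha)=\min\{\xi:|N_\alpha^\xi|=\nu\}$, and by a pigeonhole on $\alpha\in B'$ identify a fixed $\xi^*<\lambda$ and $A:=f^{-1}(\xi^*)\in[B']^\nu$. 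High connectivity of $A$ in color $\xi^*$ then amounts to showing that, given $X\in[A]^{<\nu}$ and $\alpha_0,\alpha_1\in A\setminus X$, some $\gamma\in A\setminus X$ satisfies $c(\alpha_0,\gamma)=c(\alpha_1,\gamma)=\xi^*$. This I would extract from a genericity/density argument grounded in core-disjointness: since $w_{\alpha_0,\gamma}$ and $w_{\alpha_1,\gamma}$ are disjoint for every candidate $\gamma$, the two events $c(\alpha_j,\gamma)=\xi^*$ are determined by disjoint portions of $G$ given the shared data on $r$, $V_{\alpha_j}^+\setminus r$, and $V_\gamma^-\setminus r$, and so the Cohen-independence of the cores together with $|N_{\alpha_j}^{\xi^*}|=\nu$ for both $j=0,1$ yields $\nu$-many $\gamma$ in $A$ satisfying the joint event.

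The main obstacle is precisely this last verification. Everything preceding it is set-up; the structure provided by Lemma \ref{deltasystemsystem} is tailored to make this argument go through, in that core-disjointness guarantees the two color-requirements $c(\alpha_j,\gamma)=\xi^*$ are always compatible as demands on $\mathbb{P}$, while the common root and tail data are already fixed by $G$ and behave uniformly in $\gamma$, so a clean density calculation inside $\mathbb{P}$ can pass into a $\nu$-sized set of good $\gamma$ inside $A$ in $V[G]$.
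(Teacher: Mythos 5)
Your set-up --- the Cohen poset, the maximal antichains $\mathcal{A}_{\alpha,\beta}$, the supports $u_{\alpha,\beta}$, the application of Lemma \ref{deltasystemsystem}, and the use of $\nu\to(\nu)^2_{2^\mu}$ to freeze the isomorphism type of the collapsed name --- coincides with the paper's and is fine. The gap is in the final step, and it is not a technicality: the construction of the homogeneous set by the pigeonhole $f(\alpha)=\min\{\xi:|N_\alpha^\xi|=\nu\}$, $A=f^{-1}(\xi^*)$, does not work, and ``Cohen-independence of the cores'' cannot rescue it. The value $c(\alpha_j,\gamma)$ depends not only on the pairwise-disjoint cores $w_{\alpha_j,\gamma}$ but also on $G\restriction(V_{\alpha_j}^{+}\cup V_\gamma^{-}\cup r)$; the $V_\gamma^{-}$ block is \emph{shared} by the two edges $\{\alpha_0,\gamma\}$ and $\{\alpha_1,\gamma\}$, and the blocks $V_{\alpha_0}^{+}$, $V_{\alpha_1}^{+}$ are already fixed by $G$ with no synchronization between them. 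Concretely, consider a name for which $\dot c(\alpha,\gamma)$ is $\xi^*$ or $\xi'$ according as a designated bit of $G$ on $V_\alpha^{+}\setminus r$ agrees or disagrees with one on $V_\gamma^{-}\setminus r$. Then every $\alpha\in B'$ has $\nu$ many neighbors in each color, so $f$ is constant and $A=B'$, yet the color class $\xi^*$ splits $A$ into two cliques with no edges between them: not even connected. A second, independent obstruction is that membership in your $A$ is a tail event (``$\nu$ many $\gamma$ receive color $\xi^*$'') decided by no condition, so the density argument you invoke can produce $\gamma$ with $c(\alpha_0,\gamma)=c(\alpha_1,\gamma)=\xi^*$ but cannot place that $\gamma$ \emph{inside} $A$, which is required for a path in the induced subgraph.

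The paper's proof is built precisely to avoid both problems, and the two missing ingredients are: (i) a preliminary construction, below an arbitrary $q$, of a decreasing $\lambda^{+}$-sequence of conditions along a set $W\subseteq B$ of order type $\lambda^{+}+1$, each deciding the colors of all earlier pairs; a pigeonhole on $\lambda^{+}$ then yields a single color $i$ and a single condition $s$ forcing a $\lambda$-sized $X$ to be $i$-monochromatic --- this is what certifies that $i$ is a color that can be coherently \emph{forced}, together with boundary data $s_\varepsilon=s\restriction V_\varepsilon$ witnessing it; and (ii) the homogeneous set is then defined from the generic itself, $\alpha\in Y_\varepsilon$ iff $G\restriction V_\alpha$ is the order-isomorphic copy of $s_\varepsilon$, so that membership is decided by a single small condition (making each $Y_\varepsilon$ unbounded by genericity) and so that any two members of $Y$ carry synchronized boundary data; only then does copying $s$ on the disjoint cores $u_{\alpha,\delta}\setminus V_\alpha$, $u_{\beta,\delta}\setminus V_\beta$ give the common $i$-colored neighbor $\delta$. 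You need some version of both steps; without them the argument does not close.
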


\begin{proof} Let $\mathbb P$ be the forcing for adding $\nu$ many $\mu$-Cohen subsets of $\nu$, i.e.,
$$\mathbb P=\{p: p\text{ is a partial function from }\nu\text{ to }2\text{ of size less than }\mu\}$$
reverse-ordered by extension. By assumption, $\mathbb P$ has the $\mu^+$-c.c; it is evidently $\mu$-closed as well, and consequently preserves cardinals. By standard arguments, $\Vdash_{\mathbb P}``2^\mu=\nu$''.

Let $\dot c$ be a $\mathbb P$-name such that $\Vdash_{\mathbb P}``\dot c:[\nu]^2\to \lambda$''. For every $\alpha<\beta<\nu$ let $\mathcal{A}_{\alpha,\beta}:=\{p_{\alpha,\beta,\xi}:\xi<\mu\}$ be a maximal antichain in $\mathbb P$
with corresponding $\{i_{\alpha,\beta,\xi}:\xi<\mu\}$
 such that $p_{\alpha,\beta,\xi}\Vdash `` \dot c (\{\alpha,\beta\})=i_{\alpha,\beta,\xi}$''. Let 
 $$u_{\alpha,\beta}=\{\alpha,\beta\}\cup \bigcup_{\xi<\mu} \text{dom}(p_{\alpha,\beta,\xi})$$
Let $\{\gamma_{\alpha,\beta,\eta}:\eta<\varepsilon_{\alpha,\beta}\}$ enumerate $u_{\alpha,\beta}$ in increasing order. Define a relation $E$ on $[\nu]^2$ by declaring $\{\alpha_1,\beta_1\} \ E \ \{\alpha_2,\beta_2\}$ if and only if
 \begin{enumerate}
 \item $\varepsilon_{\alpha_1,\beta_1}=\varepsilon_{\alpha_2,\beta_2}$,
 \item $\alpha_1=\gamma_{\alpha_1,\beta_1,\eta}$ if and only if $\alpha_2=\gamma_{\alpha_2,\beta_2,\eta}$,
 \item $\beta_1=\gamma_{\alpha_1,\beta_1,\eta}$ if and only if $\beta_2=\gamma_{\alpha_2,\beta_2,\eta}$,
\item $\{\eta\,:\,\gamma_{\alpha_1,\beta_1,\eta}\in\text{dom}(p_{\alpha_1,\beta_1,\xi})\}=\{\eta\,:\,\gamma_{\alpha_2,\beta_2,\eta}\in\text{dom}(p_{\alpha_2,\beta_2,\xi})\}$ for every $\xi<\mu$,
 \item $p_{\alpha_1,\beta_1,\xi}(\gamma_{\alpha_1,\beta_1,\eta})=p_{\alpha_2,\beta_2,\xi}(\gamma_{\alpha_2,\beta_2,\eta})$ for every $\eta$ as in (4) and $\xi<\mu$, and
 \item $i_{\alpha_1,\beta_1,\xi}=i_{\alpha_2,\beta_2,\xi}$ for every $\xi<\mu$.
 \end{enumerate}
Clearly $E$ is an equivalence relation on $[\nu]^2$ with $2^\mu<\nu$ many equivalence classes. As $\nu$ is weakly compact, there exists an $A\in [\nu]^\nu$ such that $\{u_{\alpha,\beta}\,:\,\{\alpha,\beta\}\in[A]^2\}$ all falls in a single class. Further thin $A$ to a $B\subseteq A$ as in Lemma \ref{deltasystemsystem}. Write $\mathtt{r}$ for the root of the $\Delta$-system $\{V_\beta\,:\,\beta\in B\}$.

Observe that in this context the key terms of Lemma \ref{deltasystemsystem} take on more particular meanings: $V_\alpha^{+}$,  for example,  records exactly those coordinates at which some $p\in\mathcal{A}_{\alpha,\beta}$ and $q\in\mathcal{A}_{\alpha,\gamma}$ may disagree. The argument now proceeds in two steps; the uniformities of the family $B$ are important to each. In the first step, we extend any $q\in\mathbb{P}$ to an $s$ deciding the elements of some $\dot c$-homogeneous $X\in [\nu]^\lambda$. In the second step, genericity below $q$ propagates that homogeneity to highly connect a cofinal $Y\subseteq \nu$ in the forcing extension.

Therefore fix $q\in\mathbb{P}$ and a $W\subseteq B$ of order-type $\lambda^{+}+1$ such that $$\text{dom}(q)\cap\bigcup\, \{u_{\alpha,\beta}\,:\,\{\alpha,\beta\}\in [W]^2\}  \subseteq \mathtt{r}$$
Let $\{\varepsilon_\eta\,:\,\eta\leq\lambda^{+}\}$ enumerate $W$ in increasing order. We will recursively define ``colors'' $i_\eta\in\lambda$ and conditions $q_\eta$ below $q$ so that for all $\eta\leq\lambda^{+}$,
\begin{enumerate}
\item $\xi<\eta$ implies that $q_\xi\geq q_\eta$,
\item $q_\eta\Vdash\text{``}\dot{c}(\{\varepsilon_\xi,\varepsilon_{\lambda^+}\})=i_\xi\text{''}\,$ for all $\xi\leq\eta$,
\item $q_\eta\Vdash\text{``}\dot{c}(\{\varepsilon_\xi,\varepsilon_{\zeta}\})=i_\xi\text{''}\,$ for all $\xi<\zeta\leq\eta$, and
\item $q_\eta=q\cup\big[\bigcup_{\xi\leq\eta} p_{\varepsilon_\xi,\varepsilon_{\lambda^{+}}} \big]\cup\big[\bigcup_{\xi<\zeta\leq\eta} p_{\varepsilon_\xi,\varepsilon_\zeta}\big]$, where each $p_{\varepsilon_\xi,\varepsilon_\zeta}$ is a member of the antichain $\mathcal{A}_{\varepsilon_\xi,\varepsilon_\zeta}$. 
\end{enumerate}
To begin the construction, let $q_0$ be the union of $q$ with any compatible element of $\mathcal{A}_{\varepsilon_0,\varepsilon_{\lambda^{+}}}$. Suppose now that $q_\eta$ satisfying (1)-(4) have been constructed for every $\eta$ less than $\delta$. Then there exists for each such $\eta$ a $p_{\varepsilon_\eta,\varepsilon_{\lambda^{+}}}\in\mathcal{A}_{\varepsilon_\eta,\varepsilon_{\lambda^{+}}}$ so that $\bigcup_{\eta<\delta} p_{\varepsilon_\eta,\varepsilon_{\lambda^{+}}}\subseteq\bigcup_{\eta<\delta} q_\eta$. Let $p'_{\varepsilon_\eta,\varepsilon_{\delta}}$ denote their induced respective ``copies'' under the order-isomorphisms $\pi_\eta: u_{\varepsilon_\eta,\varepsilon_{\lambda^{+}}} \!\to u_{\varepsilon_\eta,\varepsilon_{\delta}}$. By arrangement, $$q'_\delta:=\big[\bigcup_{\eta<\delta} q_\eta\big]\cup\big[\bigcup_{\eta<\delta} p'_{\varepsilon_\eta,\varepsilon_{\delta}}\big]$$ is a function. Let $q_\delta$ be the union of $q'_\delta$ with any compatible element of $\mathcal{A}_{\varepsilon_\delta,\varepsilon_{\lambda^{+}}}$. The condition $q_\delta$ is as desired; in this fashion the construction proceeds.

For some $i\in\lambda$ the set $\{\eta\,:\,i_\eta=i\}$ is unbounded in $\lambda^{+}$. Let $X'$ collect its first $\lambda$ elements. Let $\bar{\eta}=\sup X'$ and let $s=q_{\bar{\eta}}$ and let $X=\{\varepsilon_\eta\,:\,\eta\in X'\}$. Clearly $s\Vdash\text{``}\dot{c}''[X]^2=\{i\}\text{''}$.  This completes the first of the steps described above.

For the second step, let $s_\varepsilon= s \restriction V_\varepsilon$ for each $\varepsilon\in X$. Let $V[G]$ be a forcing extension of $V$ by a $\mathbb{P}$-generic filter $G$; therein define the family
$$Y=\bigcup_{\varepsilon\in X} Y_{\varepsilon}$$
where 
\begin{align*} Y_\varepsilon=\{\alpha\in B\,:\, & \text{ there exists a }p\in G\text{ with }\text{dom}(p)\subseteq V_\alpha \text{ such that } \\ & \text{ the order-isomorphism } \pi:V_\alpha\to V_\varepsilon\text{ sends }p\text{ to }s_\varepsilon\}
\end{align*}
Lastly, let $q'=q\cup (s\restriction \mathtt{r})$.
\begin{claim}\label{claim} $q'\Vdash \text{``}(\dot {Y}, \dot c^{-1}(i)\cap [\dot{Y}]^2)\text{ is a highly connected graph of size }\nu \text{"}$.
\end{claim}
As $q$ was arbitrary, this claim will establish the proposition.
\begin{proof}[Proof of Claim \ref{claim}] Observe first that for each $\alpha\in\nu$ and $\varepsilon\in X$ the set $$D_\alpha^\varepsilon:=\{ p\,:\,p\Vdash\text{``}\dot{Y}_\varepsilon\not\subseteq\alpha\text{''}\}$$ is dense below $q'$. Hence $q'$ forces that each $Y_\varepsilon$ is unbounded in $\nu$.

Suppose now that $q''\leq q'$ forces that $\alpha$ and $\beta$ are in $Y$. Without loss of generality, $q''$ decides the witnesses to this fact as well, i.e., there exist some $p_\alpha, p_\beta\subseteq q$ and $\varepsilon(\alpha), \varepsilon(\beta)\in X$ such that the order-isomorphism $\pi:V_\alpha\to V_{\varepsilon(\alpha)}$ sends $p_\alpha$ to $s_{\varepsilon(\alpha)}$, and similarly for $\beta$. We'll show that for any $\gamma\in\nu$ there exists a $\delta>\gamma$ and $q'''\leq q''$ such that
\[\tag{$\ddagger$} q'''\Vdash\text{``}\dot{c}(\alpha,\delta)=\dot{c}(\beta,\delta)=i\text{''}\]
This will establish the claim. To that end, take $\varepsilon\in X\backslash (\text{max}\{\varepsilon(\alpha),\varepsilon(\beta)\}+1)$. By Lemma \ref{deltasystemsystem} there exists a $\delta\in B\backslash(\gamma+1)$ such that $u_{\alpha,\delta}\backslash V_\alpha$ and $u_{\beta,\delta}\backslash V_\beta$ are disjoint from $\text{dom}(q'')$. Extend $q''$ by ``copying'' $s\restriction (u_{\varepsilon(\alpha),\varepsilon}\cup u_{\varepsilon(\beta),\varepsilon})$ via the order-isomorphism to $u_{\alpha,\delta}\cup u_{\beta,\delta}$. Denote this extension by $q'''$. Our assumptions on $\alpha$ and $\beta$ ensure that $q'''$ is in fact a condition, and our assumptions on $E$ translate the relation $s\Vdash\text{``}\dot{c}(\varepsilon(\alpha),\varepsilon)=\dot{c}(\varepsilon(\beta),\varepsilon)=i\text{''}$ to ($\ddagger$), as desired.
\end{proof}
\end{proof}

\begin{corollary}\label{corollary}
Assuming the existence of a weakly compact cardinal, it is consistent with the ZFC axioms that $2^{\aleph_1}\to_{hc}(2^{\aleph_1})^2_{\aleph_0}$.
\end{corollary}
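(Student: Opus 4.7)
The plan is to instantiate Proposition \ref{weaklycompact1} with $\lambda=\aleph_0$, $\mu=\aleph_1$, and $\nu$ a weakly compact cardinal. The strict inequalities $\lambda<\mu<\nu$ are automatic, so the only hypothesis demanding attention is $\mu=\mu^{<\mu}$. At $\mu=\aleph_1$ this reads $\aleph_1^{\aleph_0}=\aleph_1$, which is precisely the continuum hypothesis. The work therefore reduces to securing a ground model in which CH holds and some weakly compact cardinal exists.

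Starting from any model $V$ containing a weakly compact cardinal $\nu$, I would first perform a preparatory small forcing, if necessary, to guarantee CH in the ground model---for instance, the L\'evy collapse of $2^{\aleph_0}$ to $\aleph_1$, which has size less than $\nu$ and so preserves the weak compactness of $\nu$ by the standard absoluteness of large cardinal properties under small forcing. In the resulting prepared ground model, the hypotheses of Proposition \ref{weaklycompact1} with the parameters above are all met.

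Now apply the proposition. It produces a cardinal-preserving extension in which the identities $2^{\aleph_1}=\nu$ and $\nu\to_{hc}(\nu)^2_{\aleph_0}$ hold simultaneously; substituting the first into the second yields $2^{\aleph_1}\to_{hc}(2^{\aleph_1})^2_{\aleph_0}$, as required. The argument carries no real obstacle, since Proposition \ref{weaklycompact1} has done all the heavy lifting. The only auxiliary point to verify is the cardinal arithmetic hypothesis $\mu=\mu^{<\mu}$ at $\mu=\aleph_1$, which is simply CH and can always be arranged by the small preparatory step described above without damaging any fixed weakly compact cardinal above.
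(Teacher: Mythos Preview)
Your proposal is correct and follows the same route the paper intends: the corollary is stated without proof, as an immediate instantiation of Proposition~\ref{weaklycompact1} with $\lambda=\aleph_0$, $\mu=\aleph_1$, and $\nu$ weakly compact. Your explicit handling of the hypothesis $\aleph_1^{\aleph_0}=\aleph_1$ via a small preparatory forcing is a sound elaboration of a detail the paper leaves implicit.
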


\section{Main questions}

We turn in conclusion to the most immediate instance of Question \ref{bigquestion}: 

\begin{question} \label{ctblquestion} What is the least cardinal $\mu$ for which it is consistent with the ZFC axioms that $\mu\to_{hc}(\mu)^2_{\aleph_0}$?
\end{question}

Corollary \ref{corollary} may be viewed as approximating to any of several possibilities. For example: the $\mu=2^{\aleph_1}$ of Corollary \ref{corollary} falls, in the forcing extension of Proposition \ref{weaklycompact1}, well below any weakly compact $\nu$, but it remains a regular limit cardinal. Therefore we may ask:

\begin{question} \label{inaccessible} Must the least cardinal $\mu$ for which it is consistent with the ZFC axioms that $\mu\to_{hc}(\mu)^2_{\aleph_0}$ be weakly inaccessible?
\end{question}

By Lemma \ref{lemma}, such a $\mu$ is necessarily regular; hence Question \ref{inaccessible} amounts to asking if such a $\mu$ may be a successor cardinal. If indeed it may be, then Corollary \ref{corollary} might be viewed instead as approximating to the following alternative to Question \ref{inaccessible}:

\begin{question}\label{aleph2}
Assuming whatever large cardinals may be necessary, is it consistent with the ZFC axioms that $\aleph_2\to_{hc}(\aleph_2)^2_{\aleph_0}$?
\end{question}

Reasoning from \cite{harringtonshelah} and \cite{lambiehanson} shows that if $\mu$ is the successor of a regular cardinal and $\lambda<\mu$, then the consistency strength of ``there exists no $\lambda$-stationary $\square(\mu)$ sequence'' is exactly a Mahlo cardinal. By Proposition \ref{square}, this gives a lower bound on the assumptions necessary to any affirmative answer to Question \ref{aleph2}. Again by Proposition \ref{square}, answers to any of the above questions entail further questions of consistency strength; most generally:

\begin{question}
What is the consistency strength of  the existence  of an uncountable cardinal $\mu$ such that $\mu\to_{hc}(\mu)^2_{\aleph_0}$?
\end{question}

By Proposition \ref{negative}, positive relations $\mu\to_{hc}(\mu)^2_\lambda$ will involve cardinal arithmetic assumptions as well. An affirmative answer to Question \ref{aleph2}, for example, would imply the continuum hypothesis. It is unclear if weaker relations would also. More particularly, the continuum hypothesis implies that $\aleph_2\to_{hc}(\aleph_1)^2_{\aleph_0}$, by Proposition \ref{muplus}. Is the reverse true? In other words:

\begin{question} Is the continuum hypothesis equivalent to the assertion that $\aleph_2\to_{hc}(\aleph_1)^2_{\aleph_0}$?
\end{question}

A question of a similar flavor is the following:

\begin{question}
Does $\aleph_2\to_{\aleph_1\text{-}c}(\aleph_2)^2_{\aleph_0}$?
\end{question}

\medskip

{\bf Acknowledgements.} The first author would like to thank Chris Lambie-Hanson for valuable discussion of the consistency strengths of failures of $\lambda$-stationary $\square(\mu)$ sequences to exist.

\end{document}